\documentclass[a4paper,fleqn]{cas-sc}
\usepackage[numbers]{natbib}
\usepackage{lineno}
\usepackage{hyperref}
\usepackage{enumerate}
\usepackage{graphicx}
\usepackage{mathtools}
\usepackage{float}
\usepackage{placeins}
\usepackage{caption}
\usepackage{amsthm, amsmath, amssymb, amsfonts}
\newtheorem{theorem}{Theorem}[section]
\newdefinition{definition}{Definition}[section]     
\newtheorem{lemma}{Lemma}[section]
\newtheorem{proposition}{Proposition}[section]

\newenvironment{enumlemma}[2][(i)]
{\begin{lemma}#2\mbox{}
		\begin{enumerate}[#1]}
		{\end{enumerate}
\end{lemma}}

\newenvironment{enumproof}[2][(i)]
{\begin{proof}#2
		\begin{enumerate}[#1]}
		{\end{enumerate}
\end{proof}}                                                       
\def\tsc#1{\csdef{#1}{\textsc{\lowercase{#1}}\xspace}}
\tsc{WGM}
\tsc{QE}
\begin{document}
	\title[mode = title]{Bivariate fractal interpolation functions on triangular domain for numerical integration and approximation} 	
	\author[1]{Aparna M.P} \fnmark[1]	
	\author[2]{P. Paramanathan}[orcid=0000-0003-0688-4858]
	\cormark[1]
	\fnmark[2]
	\address[1]{Department of Mathematics, 
			Amrita School of Physical Sciences, Coimbatore,
		Amrita Vishwa Vidyapeetham, India.}
	\address[2]{Department of Mathematics, 
			Amrita School of Physical Sciences, Coimbatore, Amrita Vishwa Vidyapeetham, India.}
	
	\cortext[cor1]{Corresponding author}
	\fntext[fn1]{mp$\_$aparna@cb.students.amrita.edu (Aparna M.P)} \fntext[fn2]{p$\_$paramanathan@cb.amrita.edu (P. Paramanathan)}
	\begin{abstract}
		The primary objectives of this paper are to present the construction of bivariate fractal interpolation functions over triangular interpolating domain using the concept of vertex coloring and to propose a double integration formula for the constructed interpolation functions. 
		Unlike the conventional constructions, each vertex in the partition of the triangular region has been assigned a color such that the chromatic number of the partition is 3. 
		A new method for the partitioning of the triangle is proposed with a result concerning the chromatic number of its graph.   Following the construction, a formula determining the vertical scaling factor is provided. With the newly defined vertical scaling factor, it is clearly observed that the value of the double integral coincides with the integral value calculated using fractal theory. Further, a relation connecting the fractal interpolation function with the equation of the plane passing through the vertices of the triangle is established. Convergence of the proposed method to the actual integral value is proven with sufficient lemmas and theorems. Sufficient examples are also provided to illustrate the method of construction and to verify the formula of double integration.
	\end{abstract}
	
	\begin{keywords}
		Bivariate fractal interpolation function (BFIF) \sep Chromatic number  \sep Double integration 
	\end{keywords}
	
	\maketitle
	\section{Introduction}
	Fractal interpolation functions (FIF) are constructed using iterated function system (IFS)\cite{fe}. The IFS consists of a complete metric space together with a finite set of contraction mappings \cite{an}. Normally, each contraction map in the IFS is composed of two types of functions, the former one responsible for the contraction of the entire interpolating domain and the latter one for defining the Read-Bejraktarevic operator whose fixed point is the required fractal interpolation function. The fundamental problem in the theory of fractal interpolation is to  establish the well definiteness of this operator. In \cite{bf}, L. Dalla imposed a restrictive condition on the interpolation points for tackling this problem, when the interpolating domain is a rectangle. For the same purpose, a piecewise function, defined in terms of the usual IFS, is proposed in \cite{nt}. For the triangular interpolating domain, the problem of well definiteness was dealt by Geronimo and Hardin and they proposed the method of vertex coloring to solve the problem \cite{fi}. The present paper further connects this approach to define  bivariate fractal interpolation functions and to derive double integration for such functions. 
	\\
	
	Barnsley in \cite{ff} introduced the idea of fractal interpolation functions using iterated function systems for the first time where single variable interpolation functions were generated as the attractors of the IFS. The construction of bivariate fractal interpolation functions considered by L Dalla required the interpolation points on the boundary of the rectangle to be collinear \cite{bf}.  Malysz used the fold-out technique for constructing  bivariate fractal interpolation function over rectangles by taking the same vertical scaling factor \cite{md}. In \cite{cf}, considering the vertical scaling factor as a function, Metzler and Yun generalised this construction. In \cite{fs}, Massopust constructed bivariate fractal interpolation functions on a triangular region with a restrictive condition on the interpolation points. The construction proposed in his work was further modified in \cite{fi}. \cite{fi} introduces the idea of the coloring of the vertices to solve the problem of well definiteness. 
	The construction of the bivariate fractal interpolation function is again considered in \cite{fii}. The paper, however, fails to establish the well definiteness of the fractal interpolation operator. The numerical integration of fractal interpolation functions was first carried out by Navascues in \cite{ni}. The derived formula for integration is then compared with the compound trapezoidal rule there.		 \\
	
	The present paper aims to define double integration for two variable fractal interpolation functions constructed over a triangular domain. By proposing a method for the partition of the triangle and introducing a new vertical scaling factor, this paper provides a detailed explanation for the construction of these functions. Following the derivation of double integration, the constructed fractal interpolation function is approximated to the equation of the plane passing through the vertices of the triangle. After proving the theorems in error analysis using this approximation, the paper shows the attractors of the IFS's and the double integral values obtained for some functions.
	\\
	
	The organization of the paper is as follows: The second section presents the formulation of the IFS and proves the corresponding theorems with the results concerning the partitioning of the triangle and its chromatic number. The derivation of the double integration formula is proposed in the third section. In the fourth section, it is established that the bivariate fractal interpolation functions defined over triangular regions can be approximated to the equation of the plane passing through the vertices of the triangle. The fifth section provides the formula for the vertical scaling factor and its upper bound. Using, the newly obtained approximating function, the propositions and theorems are proved for the error analysis. The paper concludes by displaying the tables and graphs describing the results.
	
	\section{Construction of bivariate fractal interpolation function over triangular regions}
	\vspace{0.2cm}
	\subsection{Method of Partition}
	\vspace{0.2cm}
	
	Consider the triangle $D$ with vertices $A(x_{1},y_{1}), B(x_{2},y_{2}),C(x_{3},y_{3}).$ The algorithm for the partition is as follows.
	\begin{enumerate}
		\item Divide the height of the triangle $D$ into $'d'$ number of equal parts, thereby getting $'d+1'$ new points $(x_{i},y_{1}), (x_{i},y_{2}), ..., (x_{i},y_{d+1})$ along the height of $D,$ where $y_{1}=y$ coordinate of the point $A$ or $B,$ $y_{d+1}=y$ coordinate of the point $C.$
		\item Draw lines $y=y_{j}, j=1,2,...,d$ parallel to $X-$axis from $AC$ to $BC.$
		\item Divide each of the horizontal lines $y=y_{j}, j=1,2,...,d$ into $'d'$ number of equal parts, generating $'d+1'$ new points denoted by $(x_{1},y_{j}), (x_{2},y_{j}),...(x_{d+1},y_{j}) $ along each horizontal line $y=y_{j}$ for $j=1,2,...,d$ where $(x_{1},y_{j}), (x_{d+1},y_{j})$ lies on the sides $AC$ and $BC$ respectively. 
		\item Then, join the new points as shown in Figure \ref{Figure 1}. 
	\end{enumerate}
	
	\begin{center}
		\begin{figure}[h]
			\begin{center}
				\includegraphics[width=12cm]{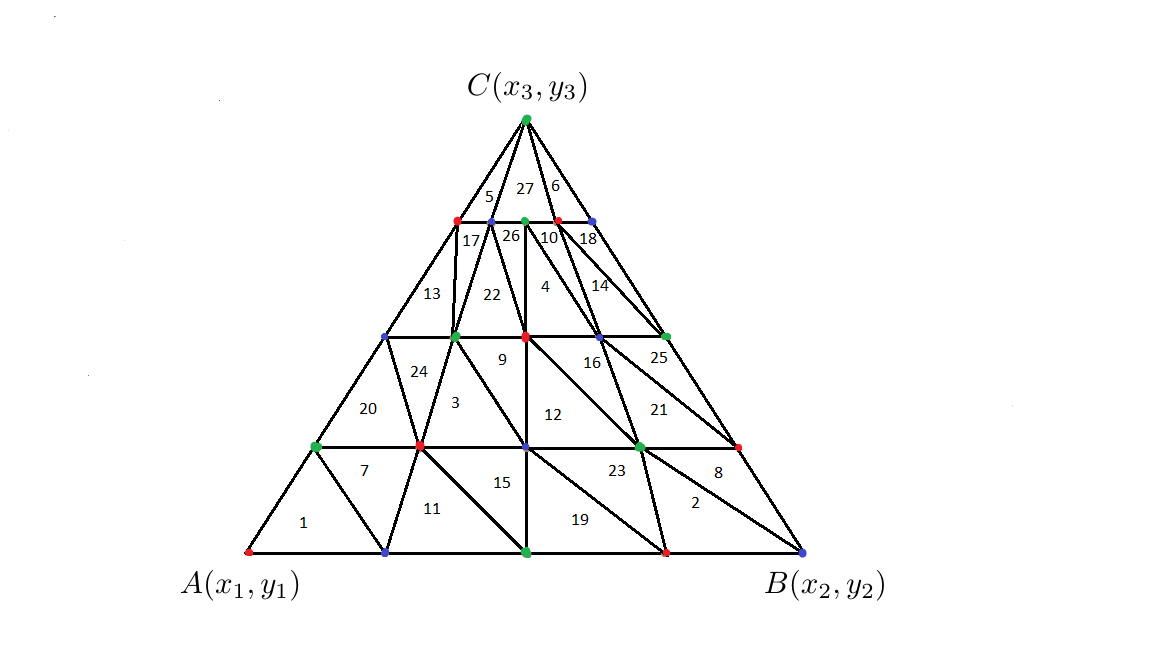}
				\caption{Partition of $D$ when $d=4$}
				\label{Figure 1}
			\end{center}
		\end{figure}
	\end{center}
	In Figure \ref{Figure 1}, red corresponds to color 1, blue stands for color 2, and green for color 3.
	\begin{enumlemma}[i)]
		{For the partition defined above, if the height of the triangle $D$ is divided into $d$ number of equal parts, then, there are}
		\item  $2d^{2}-2d+3$ subtriangles and
		\item  $d^{2}+d+1$ vertices, 
	\end{enumlemma}
	in the partition.

	\begin{enumproof}[i)]
		
		{}\item According to the partition defined, the height of the triangle is divided into $d$ number of equal parts, resulting $d+1$ $'y'$ values, where $y_{d+1}$ is the $y$ coordinate of the top vertex of $D.$  Along each horizontal line $y=y_{j}, j=1,2,...,d$ the line is divided into $d$ number of equal parts. The coordinates of the newly obtained points along the line $y=y_{j}$ are denoted by $(x_{1},y_{j}), (x_{2},y_{j}), ..., (x_{d+1},y_{j}).$ It is observed from the partition that between two consecutive points on the line $y=y_{j},$ two subtriangles are obtained (a normal and an inverted triangle). Hence, along each line $y=y_{j},$ there are $2d$ subtriangles for $j=2,...,d.$ Considering the top most three subtriangles, which are fixed irrespective of $d,$ there are $(2d)(d-1)+3$ subtriangles in the partition. $i.e,$ the total number of subtriangles in the partition is $2d^{2}-2d+3.$ Hence the proof.
		\item Following the partition, since each horizontal line $y=y_{j}$ is divided into $d$ equal parts, there are $d+1$ new vertices along each line $y=y_{j}, j=1,2,...,d.$ Now, including the top most vertex $C,$ of the triangle $D,$ there are $[(d+1)d]+1=d^{2}+d+1$ vertices in the partition. Hence the proof.
	\end{enumproof}

	\begin{lemma}
		If $d=3n+1, n \in N, $ then the graph of the partition defined above has chromatic number 3.
	\end{lemma}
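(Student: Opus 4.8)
The plan is to establish the two inequalities $\chi \ge 3$ and $\chi \le 3$ separately, the first being immediate and the whole difficulty residing in the second together with the arithmetic hypothesis $d = 3n+1$.

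First, since the partition contains genuine subtriangles (each elementary cell of Figure \ref{Figure 1} is a $3$-cycle in the graph), any proper colouring must assign three distinct colours to the three vertices of one such triangle, so $\chi \ge 3$ for \emph{every} $d$. Hence the entire content of the lemma is the upper bound: that three colours already suffice when $d = 3n+1$.

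For the upper bound I would exhibit one explicit colouring and verify it is proper. Label the $i$-th division point of the horizontal line $y = y_j$ by $v_{j,i}$ for $0 \le i \le d$ and $1 \le j \le d$, together with the apex $C$; by Lemma 2.1 these are exactly the $d^2 + d + 1$ vertices, so a rule assigning a colour to each $v_{j,i}$ colours the whole graph. The natural candidate, matching the red/blue/green pattern of Figure \ref{Figure 1}, is the period-three rule $c(v_{j,i}) \equiv (i+j) \pmod 3$, the residues $0,1,2$ being identified with colours $1,2,3$. The first step is then routine: on each trapezoidal strip between consecutive lines $y_j$ and $y_{j+1}$, every elementary up- and down-triangle has vertices whose indices $i+j$ are three consecutive integers, hence receives all three residues and is rainbow, while the horizontal, vertical and diagonal edges all join vertices of different residue. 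This disposes of the $2d^2 - 2d$ triangles in the strips and leaves only the three fixed top triangles.

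The crux, and the only place where the hypothesis $d = 3n+1$ is actually used, is the apex. The two lateral sides $AC$ and $BC$ carry the colour sequences $c(v_{j,0}) \equiv j$ and $c(v_{j,d}) \equiv d+j \pmod 3$, which are arithmetic progressions of common difference $1$, and the three fixed top triangles meeting at $C$ force the colours arriving along the two sides to be compatible with a single admissible colour at $C$. I would compute the residues of the vertices of these three top triangles and show that the set of colours adjacent to $C$ omits exactly one residue precisely when $d \equiv 1 \pmod 3$; equivalently, the period-three pattern closes up consistently around the apex exactly for $d = 3n+1$. Carrying out this closure computation, reconciling the two lateral progressions with the fixed apex cap, is where the genuine work lies, whereas the verification on the strips is mechanical. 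I expect this apex reconciliation to be the main obstacle, since it is here that the modular condition is born; the same computation also explains why for $d \not\equiv 1 \pmod 3$ the vertex $C$ would be forced to see all three colours, after which one concludes $\chi = 3$ for $d = 3n+1$.
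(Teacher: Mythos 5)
Your overall framing is sound: the lower bound $\chi\geq 3$ is immediate from any single subtriangle (a $3$-cycle), and the entire content of the lemma is the upper bound, which is where the hypothesis $d=3n+1$ must enter. But as written the proposal has a genuine gap, and it sits exactly where you place "the genuine work." The apex closure computation is announced, not performed, and it is subtler than your sketch suggests: by Lemma 2.1 the top cap of the partition consists of exactly \emph{three} subtriangles irrespective of $d$, so the apex $C$ is not joined to all $d+1$ points of the line $y=y_{d}$ in a fan, but only to two attachment points on that line. Whether your period-three colouring is proper on the cap therefore depends on the residues modulo $3$ of the indices of those two attachment points (which must be read off from the actual partition in Figure 1), not merely on $d\equiv 1 \pmod 3$. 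Until that computation is carried out, no proper $3$-colouring has been exhibited and the lemma is not proved. There is also a concrete error in the part you call routine: under $c(v_{j,i})\equiv i+j \pmod 3$, a diagonal edge joining $v_{j,i+1}$ to $v_{j+1,i}$ connects two vertices of the \emph{same} residue $i+j+1$, so the elementary triangles are rainbow only if the diagonals run in the direction $v_{j,i}$ to $v_{j+1,i+1}$; for the opposite orientation you must switch to $c(v_{j,i})\equiv i-j \pmod 3$. The rule has to be matched to the orientation shown in the figure.

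It is worth saying that your route differs from the paper's, and in conception is the more complete one. The paper's proof only argues that two colours cannot suffice along each horizontal line (a lower-bound argument, which already follows from the existence of a single triangle) and then concludes that the chromatic number is $3$ without ever exhibiting a proper $3$-colouring; that is, it omits precisely the half of the argument you identify as the crux, and it never visibly uses $d=3n+1$ for an upper bound either. So completing your plan, fixing the diagonal convention and carrying out the cap computation, would yield a genuinely stronger argument than the one in the paper; as the proposal stands, however, it leaves the decisive step undone.
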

	\begin{proof}
		According to the partition, each horizontal line $y=y_{j}$ is divided into $d$ equal parts, generating $d+1$ points along that line. Let the points be denoted by $(x_{1},y_{j}), (x_{2},y_{j}), ..., (x_{d+1},y_{j}).$ Among these points,  $(x_{1},y_{j})$ and $(x_{d+1},y_{j})$ are points on the two sides of the triangle $D.$ The remaining points are intermediate points and there $d-1$ such points along that line, where $d-1$ is a multiple of 3. Now, considering the coloring of theses points with the least number of colors, since the points  $(x_{1},y_{1})$ and $(x_{d+1},y_{1})$  are adjacent with respect to the triangle $D,$ they have to be colored differently. Without loss of generality, let $(x_{1},y_{1})$ and $(x_{d+1},y_{1})$ be colored with colors '1' and '2' respectively. Now, since the point $(x_{2},y_{1})$ is adjacent to $(x_{1},y_{1}),$ it should be colored '2.' Similarly, $(x_{3},y_{1})$ is colored '1'. Proceeding in this manner, the point $(x_{d-1},y_{1})$ will be colored '1.' Then, the point $(x_{d},y_{1})$ has to colored with a different color other than '1' and '2', since it is adjacent to both $(x_{d-1},y_{1})$ and $(x_{d+1},y_{1}).$ Hence, a minimum of 3 colors are needed to color the points along this line. The same reasoning can be applied along each of the horizontal lines $y=y_{j}$ and along the slanting sides of $D.$ Thus, it can be established that atleast 3 different colors are needed to properly color the graph. Therefore, the chromatic number of the graph is 3.
	\end{proof}
	\subsection{Construction}
	\vspace{0.2cm}
	Consider a triangular domain $D$ with vertices $(x_{1},y_{1}), \,\, (x_{2},y_{2}), \,\, (x_{3},y_{3}),$ colored '1', '2' and '3' respectively. 
	Let the triangle be partitioned into $N$ number of subtriangles $D_{1}, D_{2},..., D_{N}$ such that $D=\cup_{n=1}^{N}D_{n} $ and $\cap_{n=1}^{N}D_{n}=\phi.$ The partitioning is done such that the chromatic number of their corresponding graph is 3. Each subtriangle be numbered from 1 to $N$ as shown in Figure \ref{Figure 1}. Set $P=\{(x_{nj},y_{nj}): j=1,2,3, n=1,2,...,N\}$ to be the set of all vertices of the subtriangles $D_{n}, n=1,2,...,N.$ Let $z_{nj}=f(x_{nj},y_{nj})$ be the corresponding function values. \\ Let $R=\{(x_{nj},y_{nj},z_{nj}): j=1,2,3, n=1,2,...,N \}$ be the data set. Without loss of generality, let $(x_{n1},y_{n1})$ denotes the vertex colored '1',  $(x_{n2},y_{n2})$  be the vertex with color '2', and $(x_{n3},y_{n3})$ be the vertex colored '3' in $D_{n}.$
	\\
	Consider an invertible, affine map $L_{n}:D\rightarrow D_{n}$ such that 
	\begin{align}\label{uy}
		L_{n}(x_{j},y_{j})&=(x_{nj},y_{nj}), \,\,\text{for} \,\,\, j=1,2,3.
	\end{align}
	
	\noindent	$i.e,$ $L_{n}$ maps $(x_{j},y_{j})$ to the vertex in $D_{n}$, which is colored $j,$ for $j=1,2,3.$
	The map $L_{n}$ used here is given by,
	\begin{align}
		L_{n}(x,y)=\begin{bmatrix}
			\alpha_{n1} & \alpha_{n2} \\
			\alpha_{n3} & \alpha_{n4}
		\end{bmatrix} 
		\begin{bmatrix}
			x \\ y
		\end{bmatrix} + 
		\begin{bmatrix}
			\beta_{n1} \\ \beta_{n2}
		\end{bmatrix} 
	\end{align}
	
	\noindent	Choose a vertical scaling factor $\alpha_{n7}$ between -1 and 1 for $n=1,2,...,N$ with the scale vector $\overline{\alpha_{n7}}.$ \\ Set $F=D\times R$  and consider $N$ maps $F_{n},$ contractive in the third variable such that 
	\begin{align}\label{ur}
		F_{n}(x_{j},y_{j},z_{j})&=z_{nj}, \,\, \text{for} \,\,\, j=1,2,3, \,\,\, n=1,2,...,N.
	\end{align}
	The map $F_{n}$ is given by,
	\begin{align}
		F_{n}(x,y,z)=Q_{n}(x,y)+\alpha_{n7}z, \,\,\, n=1,2,...,N, 
	\end{align}
	\noindent where $ \,\,\, Q_{n}(x,y)=\alpha_{n5}x+\alpha_{n6}y+\beta_{n3}, \,\,\, n=1,2,...,N.$
	\vspace{0.1cm}

	\noindent 	Then, the IFS becomes 	
	\begin{align*}
		w_{n}(x,y,z)=\big(L_{n}(x,y),F_{n}(x,y,z)\big), n=1,2,...,N.
	\end{align*}
	In matrix notation,
	\begin{align}
		W_{n}(x,y,z)= \begin{bmatrix}
			\alpha_{n1} & \alpha_{n2} & 0 \\
			\alpha_{n3} & \alpha_{n4} & 0 \\
			\alpha_{n5} &\alpha_{n6} &\alpha_{n7} 		
		\end{bmatrix}
		\begin{bmatrix}
			x \\y\\z
		\end{bmatrix} +
		\begin{bmatrix}
			\beta_{n1}\\ \beta_{n2}\\\beta_{n3}
		\end{bmatrix}
	\end{align}
	
	\noindent The constants in the matrix are obtained by solving the endpoint conditions \eqref{uy} and \eqref{ur}. Then, 
	\begin{align*}
		\alpha_{n1}&=\frac{(x_{n1}-x_{n2})(y_{1}-y_{3})-(x_{n1}-x_{n3})(y_{1}-y_{2})}{(x_{1}-x_{2})(y_{1}-y_{3})-(x_{1}-x_{3})(y_{1}-y_{2})} \\
		\alpha_{n2}&=\frac{(x_{n1}-x_{n3})(x_{1}-x_{2})-(x_{n1}-x_{n2})(x_{1}-x_{3})}{(x_{1}-x_{2})(y_{1}-y_{3})-(x_{1}-x_{3})(y_{1}-y_{2})}\\
		\alpha_{n3}&=\frac{(y_{n1}-y_{n2})(y_{1}-y_{3})-(y_{n1}-y_{n3})(y_{1}-y_{2})}{(x_{1}-x_{2})(y_{1}-y_{3})-(x_{1}-x_{3})(y_{1}-y_{2})}\\
		\alpha_{n4}&=\frac{(y_{n1}-y_{n3})(x_{1}-x_{2})-(y_{n1}-y_{n2})(x_{1}-x_{3})}{(x_{1}-x_{2})(y_{1}-y_{3})-(x_{1}-x_{3})(y_{1}-y_{2})}\\
		\alpha_{n5}&=\frac{(z_{n1}-\alpha_{n7}z_{1})(y_{2}-y_{3})+(z_{n2}-\alpha_{n7}z_{2})(y_{3}-y_{1})+(z_{n3}-\alpha_{n7}z_{3})(y_{1}-y_{2})}{(x_{1}-x_{2})(y_{1}-y_{3})-(x_{1}-x_{3})(y_{1}-y_{2})}\\
		\alpha_{n6}&=\frac{-( (z_{n1}-\alpha_{n7}z_{1})(x_{2}-x_{3})+ (z_{n2}-\alpha_{n7}z_{2})(x_{3}-x_{1})+ (z_{n3}-\alpha_{n7}z_{3})(x_{1}-x_{2}))}{(x_{1}-x_{2})(y_{1}-y_{3})-(x_{1}-x_{3})(y_{1}-y_{2})}\\
		\beta_{n1}&=\frac{(x_{n1}x_{2}-x_{1}x_{n2})y_{3}+ (x_{n3}x_{1}-x_{3}x_{n1})y_{2}+ (x_{n2}x_{3}-x_{2}x_{n3})y_{1}}{(x_{1}-x_{2})(y_{1}-y_{3})-(x_{1}-x_{3})(y_{1}-y_{2})}\\
		\beta_{n2}&=\frac{(x_{2}y_{3}-x_{3}y_{2})y_{n1}+ (x_{3}y_{1}-x_{1}y_{3})y_{n2}+(x_{1}y_{2}-x_{2}y_{1})y_{n3}}{(x_{1}-x_{2})(y_{1}-y_{3})-(x_{1}-x_{3})(y_{1}-y_{2})}\\
		\beta_{n3}&=\frac{(z_{n1}-\alpha_{n7}z_{1})(x_{2}y_{3}-x_{3}y_{2}) + (z_{n2}-\alpha_{n7}z_{2})(x_{3}y_{1}-x_{1}y_{3})+(z_{n3}-\alpha_{n7}z_{3}) (x_{1}y_{2}-x_{2}y_{1})}{(x_{1}-x_{2})(y_{1}-y_{3})-(x_{1}-x_{3})(y_{1}-y_{2})}
	\end{align*}

	\begin{proposition}
		There is a metric $\sigma$ equivalent to the Euclidean metric on $R^{3}$ such that the above defined IFS is hyperbolic with respect to the new metric. Moreover, there is a unique, non-empty, compact set $G$ such that $$G=\cup_{n=1}^{N}w_{n}(G),$$ called the attractor of the IFS.	
	\end{proposition}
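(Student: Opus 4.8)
The plan is to follow the classical Barnsley--Hutchinson strategy: exhibit a metric $\sigma$ on $\mathbb{R}^{3}$, equivalent to the Euclidean one, under which every map $W_{n}$ is a contraction, and then apply the contraction principle to the associated set-valued operator. The essential difficulty is that in the \emph{Euclidean} metric the $W_{n}$ need not be contractions: the entries $\alpha_{n5},\alpha_{n6}$ shear the base variables $(x,y)$ into the height variable $z$, so a pair of points differing only in $(x,y)$ may have its $z$-separation amplified. The remedy, which is the one genuine idea in the argument, is to reweight the vertical direction by a small constant $\theta>0$, making the height coordinate ``cheap'' enough that the shear contribution is dominated by the authentic vertical contraction coming from $|\alpha_{n7}|<1$.

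Concretely, I would first record the three Lipschitz facts supplied by the construction, writing $\rho$ for the Euclidean metric on $\mathbb{R}^{2}$. (a) Each $L_{n}$ is an invertible affine map carrying $D$ onto the strictly smaller subtriangle $D_{n}$ of the partition defined above; from the explicit entries $\alpha_{n1},\dots,\alpha_{n4}$ one checks that the operator norm $a_{n}$ of its linear part satisfies $a_{n}<1$, and we put $a=\max_{1\le n\le N}a_{n}<1$. (b) The affine part $Q_{n}(x,y)=\alpha_{n5}x+\alpha_{n6}y+\beta_{n3}$ is Lipschitz in $(x,y)$: by Cauchy--Schwarz $|Q_{n}(x,y)-Q_{n}(x',y')|\le b\,\rho\big((x,y),(x',y')\big)$ with $b=\max_{n}\sqrt{\alpha_{n5}^{2}+\alpha_{n6}^{2}}<\infty$. (c) $F_{n}$ contracts the third slot by choice of scaling factor, $|F_{n}(x,y,z)-F_{n}(x,y,z')|=|\alpha_{n7}|\,|z-z'|\le c\,|z-z'|$ with $c=\max_{n}|\alpha_{n7}|<1$.

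Next I would define, for $p=(x,y,z)$ and $p'=(x',y',z')$, the metric $\sigma(p,p')=\rho\big((x,y),(x',y')\big)+\theta\,|z-z'|$ with $\theta>0$ to be fixed. This $\sigma$ is equivalent to the Euclidean metric for every $\theta>0$, since $\min(1,\theta)\,\|p-p'\|\le\sigma(p,p')\le(1+\theta)\,\|p-p'\|$, so $(\mathbb{R}^{3},\sigma)$ is complete. Abbreviating $d_{2}=\rho\big((x,y),(x',y')\big)$ and $t=|z-z'|$ and using (a)--(c),
\begin{align*}
	\sigma\big(W_{n}(p),W_{n}(p')\big) &= \rho\big(L_{n}(x,y),L_{n}(x',y')\big) + \theta\,\big|F_{n}(p)-F_{n}(p')\big| \\
	&\le a\,d_{2} + \theta\big(b\,d_{2} + c\,t\big) = (a+\theta b)\,d_{2} + c\,(\theta t) \le \kappa\,\sigma(p,p'),
\end{align*}
where $\kappa=\max(a+\theta b,\,c)$. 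Since $a<1$, I would choose $\theta$ so small that $a+\theta b<1$ (any $\theta<(1-a)/b$ if $b>0$, and $\theta$ arbitrary if $b=0$); then $\kappa<1$, every $W_{n}$ is a $\sigma$-contraction, and the IFS is hyperbolic with respect to $\sigma$.

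Finally, since $(\mathbb{R}^{3},\sigma)$ is complete, the Hutchinson operator $\mathcal{W}(S)=\bigcup_{n=1}^{N}w_{n}(S)$ is a contraction on the space of non-empty compact subsets of $\mathbb{R}^{3}$ endowed with the Hausdorff metric induced by $\sigma$, a space that is itself complete. By the Banach fixed point theorem $\mathcal{W}$ admits a unique fixed point $G$, which is the non-empty compact attractor satisfying $G=\bigcup_{n=1}^{N}w_{n}(G)$; the equivalence of $\sigma$ with the Euclidean metric lets both the hyperbolicity and the attractor be stated in Euclidean terms. The main obstacle is entirely the calibration of $\theta$ against the shear bound $b$ so that the $(x,y)$-to-$z$ coupling does not destroy contractivity; once $\theta$ is fixed, the remaining steps are the standard application of the contraction principle.
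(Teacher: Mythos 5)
Your proposal is correct and takes essentially the same route as the paper: both arguments introduce the weighted metric $\sigma = (\text{base metric}) + \theta\,|z-z'|$, calibrate $\theta$ small enough that the shear terms $\alpha_{n5},\alpha_{n6}$ coupling $(x,y)$ into $z$ are dominated, conclude each $w_{n}$ is a $\sigma$-contraction, and then invoke the standard Hutchinson/Banach fixed-point argument for the attractor. The differences are cosmetic — the paper works with the $\ell^{1}$ base metric $|x-x'|+|y-y'|$ and a specific formula for $\theta$ forcing the horizontal contraction factor below $0.5$, whereas you use the Euclidean base metric and the cleaner threshold $\theta<(1-a)/b$; both versions rest on the same (implicitly assumed) contractivity of the maps $L_{n}$ in the chosen base metric.
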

	
	\begin{proof}
		Let $(x,y,z), \,\, (x^{'},y^{'},z^{'})$ be two points in $D.$ Consider the metric $\sigma$ given by 
		\begin{align*}
			\sigma(x,y,z) &=|x-x^{'}|+|y-y^{'}|+\theta|z-z^{'}|,
		\end{align*} where $\theta$ is to be specified later. Clearly, $\sigma$ is equivalent to Euclidean metric on $R^{3}.$ \\
		Now, \vspace{0.1cm}
		\\ 
		$\sigma\big(w_{n}(x,y,z), w_{n}(x^{'},y^{'},z^{'})\big)$
		\vspace{-0.2cm}
		\begin{flalign}
			\nonumber
			&= \sigma\Big(\big(L_{n}(x,y), F_{n}(x,y,z)\big),\big(L_{n}(x^{'},y^{'}), F_{n}(x^{'},y^{'},z^{'})\big)\Big) \\
			\nonumber		
			&=\sigma\Big( \big((\alpha_{n1}x+\alpha_{n2}y+\beta_{n1}, \alpha_{n3}x+\alpha_{n4}y+\beta_{n2}), (\alpha_{n5}x+\alpha_{n6}y+\alpha_{n7}z+\beta_{n3})\big), \\ \nonumber &  \big((\alpha_{n1}x^{'}+\alpha_{n2}y^{'}+\beta_{n1}, \alpha_{n3}x^{'}+\alpha_{n4}y^{'}+\beta_{n2}), (\alpha_{n5}x^{'}+\alpha_{n6}y^{'}+\alpha_{n7}z^{'}+\beta_{n3})\big) \Big)\\
			\nonumber
			&\leq\Big(|\alpha_{n1}|+|\alpha_{n3}|\Big)|x-x^{'}|+\Big(|\alpha_{n2}|+|\alpha_{n4}|\Big)|y-y^{'}|+\theta\Big(|\alpha_{n5}||x-x^{'}|+|\alpha_{n6}||y-y^{'}|+|\alpha_{n7}||z-z^{'}|\Big)\\
			\nonumber 
			&=\Big(|\alpha_{n1}|+|\alpha_{n3}|+\theta|\alpha_{n5}|\Big)|x-x^{'}|+\Big(|\alpha_{n2}|+|\alpha_{n4}|+\theta|\alpha_{n6}|\Big)|y-y^{'}|+\theta|\alpha_{n7}||z-z^{'}|
		\end{flalign}
		
		Choose $\theta=\text{min}\left\{ \frac{\text{min}\{ 0.5-(|\alpha_{n3}|+|\alpha_{n1}|)\}}{\text{max}\{|\alpha_{n5}|\}}, \frac{\text{min}\{ 0.5-(|\alpha_{n2}|+|\alpha_{n4}|)\}}{\text{max}\{|\alpha_{n6}|\}} \right\}.$
		Then, the above expression becomes, \vspace{0.2cm}\\ 
		$\sigma\big(w_{n}(x,y,z), w_{n}(x^{'},y^{'},z^{'})\big)$
		\vspace{-0.2cm}
		\begin{align}
			\nonumber
			& \leq \Big(|\alpha_{n1}|+|\alpha_{n3}|+\theta|\alpha_{n5}|\Big)|x-x^{'}|+\Big(|\alpha_{n2}|+|\alpha_{n4}|+\theta|\alpha_{n6}|\Big)|y-y^{'}|+\theta|\alpha_{n7}||z-z^{'}|\\		
			\nonumber
			&\leq 0.5|x-x^{'}|+0.5|y-y^{'}|+\theta|z-z^{'}||\alpha_{n7}| \\
			\nonumber
			&\leq 0.5|x-x^{'}|+0.5|y-y^{'}|+\theta|z-z^{'}|\text{max}_{n}\{|\alpha_{n7}|\} \\
			\nonumber
			&\leq \text{max}\big\{0.5,\text{max}_{n}\{|\alpha_{n7}|\}\big\} \sigma\big((x,y,z),(x^{'},y^{'},z^{'})\big) \\
			\nonumber 
			& <1.
		\end{align}
		
		\noindent 	since $|\alpha_{n7}|<1, \text{max}\{0.5,|\text{max}_{n}\{|\alpha_{n7}|\}\} <1.$
		Hence the IFS is hyperbolic. Then, trivially, there exists a unique, nonempty, compact set $G$ in $R^{3}$ such that $$G=\cup_{n=1}^{N}w_{n}(G).$$
	\end{proof}

	\begin{theorem}
		Let the given data set be $R=\{(x_{nj},y_{nj},z_{nj}): j=1,2,3, n=1,2,...,N \}.$ Consider the IFS defined above, associated with the data set, $w_{n}(x,y,z)=\big(L_{n}(x,y),F_{n}(x,y,z)\big)$ for $n=1,2,...,N.$ Choose $-1<\alpha_{n7}<1$ so as to make the IFS hyperboilc. Let $G$ denote the attractor of the IFS. Then, $G$ is the graph of a continuous function $f:D\rightarrow R$ such that $f$ interpolates the given data set. 
	\end{theorem}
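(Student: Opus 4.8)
The plan is to realize $f$ as the fixed point of a Read--Bajraktarević operator on a complete metric space of continuous functions, and then to identify its graph with the attractor $G$ furnished by the preceding Proposition. First I would fix the space
$$\mathcal{F}=\{h\in C(D,\mathbb{R}):\ h(x_{j},y_{j})=z_{j},\ j=1,2,3\},$$
where $(x_{j},y_{j})$ are the three vertices of $D$ and $z_{j}$ the prescribed data there, equipped with the uniform metric $d(h,\tilde h)=\sup_{(x,y)\in D}|h(x,y)-\tilde h(x,y)|$. Being a closed subset of $(C(D,\mathbb{R}),d)$, it is complete. On it I define
$$(Th)(x,y)=F_{n}\big(L_{n}^{-1}(x,y),\,h(L_{n}^{-1}(x,y))\big),\qquad (x,y)\in D_{n},$$
which is meaningful because each $L_{n}:D\to D_{n}$ is an invertible affine bijection.

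The central difficulty, and the reason the coloring was introduced, is the \emph{well-definedness} of $T$: the subtriangles meet along shared edges, so I must verify that the two formulas attached to the two triangles abutting along an interior edge return the same value. Here I would invoke the $3$-coloring. If $D_{n}$ and $D_{m}$ share an edge $e$, its endpoints carry two distinct colors $a,b$; since each $L_{k}$ sends the vertex of $D$ colored $c$ to the vertex of $D_{k}$ colored $c$, both $L_{n}^{-1}$ and $L_{m}^{-1}$ carry $e$ onto the \emph{same} edge $E_{ab}$ of $D$ and agree there as affine parametrizations. Thus the arguments presented to $F_{n}$ and to $F_{m}$ coincide along $e$, and since the shared vertices carry consistent data the value of $Th$ on $e$ is governed by one and the same edge problem viewed from either side; this is precisely the consistency that the chromatic number $3$ secures. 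Continuity of $Th$ on each closed cell $D_{n}$ together with this agreement on shared edges then yields $Th\in C(D,\mathbb{R})$, and evaluation at the vertices via \eqref{ur} gives $Th\in\mathcal{F}$.

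Next I would show $T$ is a contraction. For $h,\tilde h\in\mathcal{F}$ and $(x,y)\in D_{n}$,
$$|(Th)(x,y)-(T\tilde h)(x,y)|=|\alpha_{n7}|\,\big|h(L_{n}^{-1}(x,y))-\tilde h(L_{n}^{-1}(x,y))\big|\le \max_{n}|\alpha_{n7}|\,d(h,\tilde h),$$
because $F_{n}$ is affine with fibre slope $\alpha_{n7}$. Taking the supremum over $D$ gives $d(Th,T\tilde h)\le\big(\max_{n}|\alpha_{n7}|\big)\,d(h,\tilde h)$ with $\max_{n}|\alpha_{n7}|<1$, so the Banach fixed point theorem yields a unique $f\in\mathcal{F}$ with $Tf=f$.

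Finally I would identify the graph. Writing $\mathcal{G}(f)=\{(x,y,f(x,y)):(x,y)\in D\}$, a direct substitution using $w_{n}=\big(L_{n},F_{n}\big)$ and the relation $Tf=f$ shows $w_{n}(\mathcal{G}(f))=\{(u,v,f(u,v)):(u,v)\in D_{n}\}$, whence $\bigcup_{n=1}^{N}w_{n}(\mathcal{G}(f))=\mathcal{G}(f)$ since the $D_{n}$ cover $D$. As $\mathcal{G}(f)$ is non-empty and compact and the attractor of the Proposition is the \emph{unique} invariant compact set, $G=\mathcal{G}(f)$, so $G$ is the graph of the continuous function $f$. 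The interpolation property is then immediate: for a vertex $(x_{nj},y_{nj})$ of $D_{n}$ one has $L_{n}^{-1}(x_{nj},y_{nj})=(x_{j},y_{j})$, hence $f(x_{nj},y_{nj})=(Tf)(x_{nj},y_{nj})=F_{n}(x_{j},y_{j},z_{j})=z_{nj}$ by \eqref{uy}, \eqref{ur} and $f\in\mathcal{F}$. The step I expect to be genuinely delicate is the well-definedness of $T$ on the shared edges; the contraction estimate, the fixed point, and the graph identification are the routine part of the Barnsley argument.
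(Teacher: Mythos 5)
Your overall architecture is the same as the paper's: a Read--Bejraktarevic operator $T$ on a complete space of continuous functions, the Banach fixed point theorem, and identification of the graph of the fixed point with the attractor via uniqueness of the invariant compact set. Your choice of function space (interpolation imposed only at the three corners of $D$, with the full interpolation property recovered afterwards from $Tf=f$) is a harmless variation on the paper's space $\mathbb{F}$, which imposes $f(x_{nj},y_{nj})=z_{nj}$ at \emph{all} data points; both spaces are complete and $T$-invariant, and your closing computation recovering $f(x_{nj},y_{nj})=z_{nj}$ is exactly the computation the paper performs. The contraction estimate and the graph identification also coincide with the paper's.

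The gap is at the step you yourself flag as delicate: well-definedness along a shared edge. Like the paper, you correctly prove that $L_{n}^{-1}$ and $L_{m}^{-1}$ agree on $e$ (affine maps agreeing at the two endpoints, which carry the same colors from either side). But from this you conclude only that ``the arguments presented to $F_{n}$ and to $F_{m}$ coincide along $e$,'' and then assert the values agree. That is insufficient: $F_{n}$ and $F_{m}$ are \emph{different} maps, so equality of arguments does not give equality of values. What must actually be shown is
\begin{align*}
Q_{n}(u,v)+\alpha_{n7}\,h(u,v)\;=\;Q_{m}(u,v)+\alpha_{m7}\,h(u,v),
\qquad (u,v)\in E_{ab}:=L_{n}^{-1}(e)=L_{m}^{-1}(e),
\end{align*}
and your appeal to ``one and the same edge problem'' never compares $Q_{n},\alpha_{n7}$ with $Q_{m},\alpha_{m7}$. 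The paper does more here: it evaluates $Tf$ at the shared vertices from both sides and checks, using the endpoint conditions \eqref{ur} and the consistency of the data at the common vertex ($z_{nj}=z_{mj}$), that both sides return the same value, and only then extends along the edge by affineness. To see what that extension really requires, note that the difference $(Q_{n}-Q_{m})+(\alpha_{n7}-\alpha_{m7})h$ vanishes at the two endpoints of $E_{ab}$ and that $Q_{n}-Q_{m}$ is affine, so on $E_{ab}$ the difference equals $(\alpha_{n7}-\alpha_{m7})\bigl(h-\ell_{ab}\bigr)$, where $\ell_{ab}$ is the affine interpolant of the corner data along that edge. Since $h$ in your space need not be affine on $E_{ab}$, this vanishes identically only when $\alpha_{n7}=\alpha_{m7}$ for the two abutting subtriangles, or when the function space is restricted so that its members are affine along the edges of $D$. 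So to complete your proof you must (i) carry out the endpoint value comparison of $F_{n}$ and $F_{m}$, and (ii) impose one of these additional hypotheses; the $3$-coloring by itself secures agreement of the pulled-back arguments, not agreement of the values.
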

	
	\begin{proof}
		Let $\mathbb{F}$ denote the set of all continuous functions $f:D\rightarrow R$ such that 
		$f(x_{nj}, y_{nj})=z_{nj}, $ for $n=1,2,..,N$ and $j=1,2,3.$ Then, it is easily verified that $\mathbb{F}$ is a complete metric space with the supremum metric. Consider the operator $T:\mathbb{F} \rightarrow \mathbb{F}$ defined by 
		\begin{align}
			(Tf)(x,y)=F_{n}\big(L_{n}^{-1}(x,y), foL_{n}^{-1}(x,y)\big), \,\,\, (x,y)\in  D_{n}, \,\,\, n=1,2..,N.
		\end{align}
		Trivially, $T$ is well defined at the interiors of each subtriangle. Now, it remains to establish the well definiteness of $T$ at the common edges. Let $\overline{((x_{nj},y_{nj}),(x_{nj^{'}},y_{nj^{'}}))}$ be an arbitrary edge of the subtriangle $D_{n}.$ Let it be shared by the subtriangle $D_{m}.$ Considering the vertex $(x_{nj},y_{nj})$ in $D_{n}.$ Then, $L_{n}^{-1}(x_{nj},y_{nj})=(x_{j},y_{j}).$ Similarly, considering $(x_{nj},y_{nj})$ as in $D_{m},$ it will be denoted by $(x_{mj}, y_{mj}).$ 
		Also, $L_{m}^{-1}(x_{mj},y_{mj})=(x_{j},y_{j}).$ The same can be done for the vertex $(x_{nj^{'}},y_{nj^{'}}).$ 
		Now, since $L_{n}^{-1}, L_{m}^{-1}$ are affine, they give the same output along the edge $\overline{(x_{nj},y_{nj}),(x_{nj^{'}},y_{nj^{'}})}.$ To verify the same for $T,$ consider the vertex $(x_{nj},y_{nj})$ in the edge $\overline{((x_{nj},y_{nj}),(x_{nj^{'}},y_{nj^{'}}))}.$ Let this edge be shared by $D_{n} $ and $D_{m}.$ Now, considering $(x_{nj},y_{nj})$ as a point in $D_{n}, $
		\begin{align*}
			(Tf)(x_{nj},y_{nj}) &= F_{n}\big(L_{n}^{-1}(x_{nj},y_{nj}), foL_{n}^{-1}(x_{nj},y_{nj})\big) \\
			\nonumber
			&=F_{n}\big((x_{j},y_{j}), f(x_{j},y_{j})\big) \\
			\nonumber
			&=F_{n}\big(x_{j},y_{j},z_{j}\big)\\
			\nonumber
			&=z_{nj}
		\end{align*}

		Similarly, considering  $(x_{nj},y_{nj})$ as a point in $D_{m}, $ then, it will be denoted by $(x_{mj},y_{mj}).$
		Then, \begin{align}
			\nonumber
			(Tf)(x_{nj},y_{nj}) &= (Tf)(x_{mj},y_{mj})\\
			\nonumber
			&=F_{m}\big(L_{m}^{-1}(x_{mj},y_{mj}), foL_{m}^{-1}(x_{mj},y_{mj})\big) \\
			\nonumber
			&=F_{m}\big((x_{j},y_{j}), f(x_{j},y_{j})\big) \\
			\nonumber
			&=F_{m}\big(x_{j},y_{j},z_{j}\big)\\
			\nonumber
			&=z_{mj} \\
			\nonumber
			&=f(x_{mj}, y_{mj}) \\
			\nonumber
			&=f(x_{nj}, y_{nj}) \\
			\nonumber
			&=z_{nj}
		\end{align}   
		Similarly, $(Tf)(x_{nj^{'}},y_{nj^{'}})=z_{nj^{'}},$ while taking the point $(x_{nj^{'}},y_{nj^{'}})$ as in $D_{n}$ and $D_{m}.$ 
		Eventually, since $T$ is defined in terms of $L_{n}$ and $L_{n}$ is affine, invertible map, it follows that $T$ is well defined along each edge of the subtriangles. 
		
		Thus, $T$ is a well defined map. Hence the operator $T$ is continuous. \\
		In order to show that the operator $T$ satisfies the endpoint conditions, consider an arbitrary point $(x_{nj},y_{nj}).$ Then, 
		\begin{align*}
			(Tf)(x_{nj},y_{nj}) &= F_{n}\big(L_{n}^{-1}(x_{nj},y_{nj}), foL_{n}^{-1}(x_{nj},y_{nj})\big) \\
			\nonumber
			&=F_{n}\big((x_{j},y_{j}), f(x_{j},y_{j})\big) \\
			\nonumber
			&=F_{n}\big(x_{j},y_{j},z_{j}\big)\\
			\nonumber
			&=z_{nj}
		\end{align*}
		Since $T$ is well defined, it also gives the same value while considering $(x_{nj},y_{nj})$ as a point in $D_{m}.$

		\noindent In order to prove contractivity of $T,$ let $f,g$ be two points in $\mathbb{F}.$ Then,	
		\begin{align}		
			\nonumber
			d\big(Tf(x,y),Tg(x,y)\big) 
			&= \text{sup} \big\{|Tf(x,y)-Tg(x,y)|:(x,y)\in D\big\} \\
			\nonumber
			&= \text{max}_{n} \text{sup} \big\{|Tf(x,y)-Tg(x,y)|:(x,y)\in D_{n}\big\} \\
			\nonumber
			&= \text{max}_{n} \text{sup} \big\{|F_{n}\big(L_{n}^{-1}(x,y), foL_{n}^{-1}(x,y)\big) - F_{n}\big(L_{n}^{-1}(x,y), goL_{n}^{-1}(x,y)\big)|: (x,y)\in D_{n} \big\} \\
			\nonumber
			&= \text{max}_{n} \text{sup} \big\{ |\alpha_{n7}\big(foL_{n}^{-1}(x,y)-goL_{n}^{-1}(x,y)\big)|: (x,y)\in D_{n}\big\} \\
			\nonumber
			& \leq \text{max}_{n} \{|\alpha_{n7}|\} \text{sup} \big\{|(f(x,y)-g(x,y))|: (x,y)\in D\big\} \\
			\nonumber
			& \leq \alpha d\big(f(x,y),g(x,y)\big), 
		\end{align} where $\alpha=\text{max}_{n} \{|\alpha_{n7}|\} <1.$
		
		By contraction mapping theorem, $T$ has a unique, fixed point $f$ such that $Tf(x,y)=f(x,y).$ $i.e,$ 
		\begin{align}
			\nonumber
			f(x,y) &= F_{n}\big(L_{n}^{-1}(x,y), foL_{n}^{-1}(x,y)\big) \\
			&= \alpha_{n7}foL_{n}^{-1}(x,y) +Q_{n}oL_{n}^{-1}(x,y) \label{rec}
		\end{align}
		Clearly, $f$ passes through the interpolation points. Now, let $G$ be the unique, attractor of the IFS and $G^{'}$ be the graph of $f.$ Then,

		\begin{align}
			\nonumber
			W(G^{'}) &=\cup_{n=1}^{N}\big(w_{n}(G^{'})\big) \\ \nonumber
			&=\cup_{n=1}^{N}w_{n}\big(\big\{(x,y,f(x,y)):(x,y)\in D\big\}\big)\\
			\nonumber
			&=\cup_{n=1}^{N}\big\{\big(L_{n}(x,y),F_{n}(x,y,f(x,y))\big):(x,y)\in D\big\}\\
			\nonumber
			&=\cup_{n=1}^{N}\big\{\big(L_{n}(x,y),foL_{n}(x,y)\big):(x,y)\in D\big\}\\	\nonumber
			&=G^{'}.
		\end{align}	 
		Thus, graph of $f$ is the unique, attractor of the IFS.
	\end{proof}
	
	\section{Double integration}
	Let $M$ denote the double integral of a bivariate fractal interpolation function $f$ over the triangular region $D.$  Then, 
	
	\begin{align}
		\nonumber
		M&= \int_{D} f(x,y) dxdy \\ \nonumber &= \sum_{n=1}^{N} \int_{D_{n}} f(x,y) dxdy \\
		\nonumber
		&=\sum_{n=1}^{N} \int_{D_{n}} Tf(x,y) dxdy \\
		\nonumber
		&=\sum_{n=1}^{N} \int_{D_{n}} F_{n}\big(L_{n}^{-1}(x,y), foL_{n}^{-1}(x,y)\big) dxdy 
	\end{align}
	
	Using coordinate transformation, $(u,v)=L_{n}^{-1}(x,y),$ the above expression becomes,

	\begin{align}
		\nonumber 
		M&=\sum_{n=1}^{N} \int_{D_{n}} F_{n}\big(L_{n}^{-1}(x,y), foL_{n}^{-1}(x,y)\big) dxdy \\ \nonumber
		&=\sum_{n=1}^{N} \int_{D} F_{n}\big(u,v,f(u,v)\big) \delta_{n}  dudv \\
		\nonumber
		&=\sum_{n=1}^{N}\int_{D} (\alpha_{n5}u+\alpha_{n6}v+\alpha_{n7}f(u,v)+\beta_{n3}) \delta_{n}dudv \\
		\nonumber
		& =\sum_{n=1}^{N}\int_{D} (\alpha_{n5}u+\alpha_{n6}v+\beta_{n3}) \delta_{n}dudv + \sum_{n=1}^{N}\int_{D} \alpha_{n7} f(u,v) \delta_{n} dudv \\
		\nonumber
		&=B+\sum_{n=1}^{N} \alpha_{n7} \delta_{n} \int_{D} f(u,v) dudv \\
		\nonumber
		&=B+AM	
	\end{align}
	
	\noindent	where $B=\sum_{n=1}^{N}\int_{D} (\alpha_{n5}u+\alpha_{n6}v+\beta_{n3}) \delta_{n}dudv,$  $A=\sum_{n=1}^{N} \alpha_{n7} \delta_{n}$  and $\delta_{n}=det(L_{n}),$
	which implies that 
	
	\begin{align}
		\nonumber
		M&=B+AM \\
		\nonumber
		M(1-A)&=B \\	
		M &= \frac{B}{1-A}
	\end{align}

	\section{Relation of the BFIF with the equation of the plane through the vertices of $D_{n}$}
	\begin{theorem}
		Let $f$ be the bivariate fractal interpolation function to the given data set. Then, $f$ satisfies the relation  
		\begin{align}
			f(x,y)&=h(x,y)+\alpha_{n7}(f-b)oL_{n}^{-1}(x,y)
		\end{align} where $h$ is the equation of the plane passing through the vertices of $D_{n}$ and $b$ is the equation of the plane passing through the vertices of $D.$
	\end{theorem}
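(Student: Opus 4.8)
The plan is to start from the self-referential equation \eqref{rec}, which on each subtriangle $D_n$ reads $f(x,y)=\alpha_{n7}\,f\circ L_n^{-1}(x,y)+Q_n\circ L_n^{-1}(x,y)$, and to peel off the contractive term. Subtracting the claimed relation from \eqref{rec}, the terms $\alpha_{n7}\,f\circ L_n^{-1}(x,y)$ cancel, so the theorem becomes equivalent to the purely affine identity
\[
h(x,y)=\big(Q_n+\alpha_{n7}\,b\big)\circ L_n^{-1}(x,y),\qquad (x,y)\in D_n,
\]
or, after composing both sides with $L_n$, to $h\circ L_n(u,v)=Q_n(u,v)+\alpha_{n7}\,b(u,v)$ for every $(u,v)\in D$. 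The whole argument then reduces to establishing this one identity.

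Next I would observe that both sides of the identity are affine functions of $(u,v)$: the planes $h$ and $b$ are affine, $Q_n(x,y)=\alpha_{n5}x+\alpha_{n6}y+\beta_{n3}$ is affine, and $L_n$ is an affine map, so sums and compositions stay affine. Since an affine function on the plane is determined completely by its values at three non-collinear points, it suffices to check that the two sides agree at the three vertices $(x_1,y_1),(x_2,y_2),(x_3,y_3)$ of $D$. These are non-collinear exactly because $D$ is a genuine triangle, which is the condition $(x_1-x_2)(y_1-y_3)-(x_1-x_3)(y_1-y_2)\neq 0$ that already makes every coefficient formula in the construction well defined.

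To verify agreement at a vertex, I would evaluate at $(u,v)=(x_j,y_j)$. On the left, $L_n(x_j,y_j)=(x_{nj},y_{nj})$ by \eqref{uy}, and because $h$ is the plane through the vertices $(x_{nj},y_{nj},z_{nj})$ of $D_n$ we have $h(x_{nj},y_{nj})=z_{nj}$. On the right, because $b$ is the plane through the vertices $(x_j,y_j,z_j)$ of $D$ we have $b(x_j,y_j)=z_j$, so the right-hand side equals $Q_n(x_j,y_j)+\alpha_{n7}z_j=F_n(x_j,y_j,z_j)$, which is precisely $z_{nj}$ by the endpoint condition \eqref{ur}. Thus both sides take the common value $z_{nj}$ at each of the three vertices, the affine identity holds throughout $D$, and back-substitution into \eqref{rec} yields the stated relation.

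I do not expect a genuine obstacle here; the one point requiring care is recognizing that the interpolation conditions \eqref{ur} are exactly the statement $Q_n(x_j,y_j)+\alpha_{n7}z_j=z_{nj}$, i.e. that the three-point match is already encoded in the construction of $F_n$ rather than something to be engineered by hand. Once this is seen, the affine-uniqueness principle does all the work, and no manipulation of the explicit coefficients $\alpha_{n5},\alpha_{n6},\beta_{n3}$ is needed.
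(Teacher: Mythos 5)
Your proposal is correct, and its skeleton matches the paper's: both reduce the theorem to the single identity $Q_n\circ L_n^{-1}(x,y)=h(x,y)-\alpha_{n7}\,b\circ L_n^{-1}(x,y)$ and then substitute it into the fixed-point equation \eqref{rec}. Where you genuinely diverge is in how that identity is justified. The paper writes out the explicit plane equations for $h$ and $b$ and then simply asserts that $Q_n(x,y)=h\circ L_n(x,y)-\alpha_{n7}b(x,y)$, leaving the reader to verify it by brute-force manipulation of the coefficients $\alpha_{n5},\alpha_{n6},\beta_{n3}$ against those formulas; the verification itself never appears. You instead prove the identity conceptually: both sides are affine in $(u,v)$, an affine function on the plane is determined by its values at three non-collinear points, and the endpoint conditions \eqref{uy} and \eqref{ur} together with the interpolation properties $h(x_{nj},y_{nj})=z_{nj}$ and $b(x_j,y_j)=z_j$ force both sides to equal $z_{nj}$ at the three vertices of $D$, which are non-collinear precisely because the common denominator $(x_1-x_2)(y_1-y_3)-(x_1-x_3)(y_1-y_2)$ is nonzero. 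Your route buys two things: it actually supplies the missing verification (so it is arguably more complete than the paper's own argument), and it makes transparent \emph{why} the identity must hold --- it is forced by the endpoint conditions rather than being an algebraic coincidence of the coefficient formulas. What the paper's formula-level presentation buys in exchange is only the explicit display of $h$ and $b$, which it reuses later in the error analysis.
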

	\begin{proof}
		Consider $h,b$ as the equations of the planes passing through the vertices of $D_{n}$ and $D$ respectively. $i.e,$ 
		\begin{small}
			\begin{align}
				h(x,y) &= \frac{ \big(x_{n2}z_{n3}-x_{n3}z_{n2}+x_{n3}z_{n1}-x_{n1}z_{n3}+x_{n1}z_{n2}-x_{n2}z_{n1}\big)y}{\big(x_{n2}y_{n3}-x_{n3}y_{n2}+x_{n3}y_{n1}-x_{n1}y_{n3}+x_{n1}y_{n2}-x_{n2}y_{n1}\big)} \\ \nonumber &- \frac{\big(y_{n2}z_{n3}-y_{n3}z_{n2}+y_{n3}z_{n1}-y_{n1}z_{n3}+y_{n1}z_{n2}-y_{n2}z_{n1}\big)x}{\big(x_{n2}y_{n3}-x_{n3}y_{n2}+x_{n3}y_{n1}-x_{n1}y_{n3}+x_{n1}y_{n2}-x_{n2}y_{n1}\big)} \\ \nonumber & - \frac{\big[\big(x_{n3}y_{n2}-x_{n2}y_{n3}\big)z_{n1}+ \big(x_{n1}y_{n3}-x_{n3}y_{n1}\big)z_{n2}+\big(x_{n2}y_{n1}-x_{n1}y_{n2}\big)z_{n3}\big] }{\big(x_{n2}y_{n3}-x_{n3}y_{n2}+x_{n3}y_{n1}-x_{n1}y_{n3}+x_{n1}y_{n2}-x_{n2}y_{n1}\big)}
			\end{align} 
		\end{small}
		and 
		\begin{align}
			b(x,y) &= \frac{ \big(x_{2}z_{3}-x_{3}z_{2}+x_{3}z_{1}-x_{1}z_{3}+x_{1}z_{2}-x_{2}z_{1}\big)y}{\big(x_{2}y_{3}-x_{3}y_{2}+x_{3}y_{1}-x_{1}y_{3}+x_{1}y_{2}-x_{2}y_{1}\big)} \\ \nonumber & - \frac{\big(y_{2}z_{3}-y_{3}z_{2}+y_{3}z_{1}-y_{1}z_{3}+y_{1}z_{2}-y_{2}z_{1}\big)x}{\big(x_{2}y_{3}-x_{3}y_{2}+x_{3}y_{1}-x_{1}y_{3}+x_{1}y_{2}-x_{2}y_{1}\big)} \\ \nonumber
			& - \frac{\big[\big(x_{3}y_{2}-x_{2}y_{3}\big)z_{1}+ \big(x_{1}y_{3}-x_{3}y_{1}\big)z_{2}+\big(x_{2}y_{1}-x_{1}y_{2}\big)z_{3}\big] }{\big(x_{2}y_{3}-x_{3}y_{2}+x_{3}y_{1}-x_{1}y_{3}+x_{1}y_{2}-x_{2}y_{1}\big)}.
		\end{align} Then, the function $Q_{n}$ satisfies 
		
		\begin{align*}
			Q_{n}(x,y)=hoL_{n}(x,y)-\alpha_{n7}b(x,y),
		\end{align*} which implies 
		\begin{align}
			Q_{n}oL_{n}^{-1}(x,y)=h(x,y)-\alpha_{n7}boL_{n}^{-1}(x,y). \label{eqn:apr}
		\end{align}
		Substituting \eqref{eqn:apr} in \eqref{rec}, it is obtained that 
		\begin{align*}
			f(x,y)=h(x,y)+\alpha_{n7}(f-b)oL_{n}^{-1}(x,y).
		\end{align*} Hence the proof.
	\end{proof}
	\section{Selection of scaling factors}
	\noindent 	Let $R=\{(x_{nj},y_{nj},z_{nj}):n=1,2,...,N, j=1,2,3\}$ be the given set of data. Consider the points $(x_{nl}, y_{nl})$ in the subtriangle $D_{n}$ where $l=1,2,...,S, S=
	(d^{'}+1)^{2}, d^{'}$ is the number of subdivisions along the height of the triangle $D_{n}$ for $n=1,2,...,N.$\\
	
	If $g$ is the fractal interpolation function to this data set, then 
	\begin{align*}
		z_{nl} &=g(x_{nl}, y_{nl}) \\
		&=\alpha_{n7}goL_{n}^{-1}(x_{nl},y_{nl}) + Q_{n}oL_{n}^{-1}(x_{nl},y_{nl}) \\
		&=\alpha_{n7}goL_{n}^{-1}(x_{nl},y_{nl}) + h(x_{nl},y_{nl})-\alpha_{n7}boL_{n}^{-1}(x_{nl},y_{nl})
	\end{align*}
	Now, approximating $g$ by $h$ and considering the optimization problem 
	\begin{align*}
		\text{min} \,\, E(\alpha_{n7})=\sum_{l=1}^{S} \Big[z_{nl}-h(x_{nl},y_{nl})-\alpha_{n7}(h-b)oL_{n}^{-1}(x_{nl},y_{nl})\Big]^2
	\end{align*} with $-0.9 \leq \alpha_{n7} \leq 0.9,$
	then, the minimum value of $\alpha_{n7}$ is :	
	\begin{align}
		\alpha_{n7}=\frac{\sum_{l=1}^{S} \Big[z_{nl}-h(x_{nl},y_{nl})\Big] u_{nl}}{\sum_{l=1}^{S}(u_{nl})^2},
	\end{align}
	where $u_{nl}=(h-b)oL_{n}^{-1}(x_{nl},y_{nl}).$
	\vspace{.4cm}
	\\
	The upper bound for $\alpha_{n7}$ can be calculated by applying Cauchy-Schwartz inequality to the above value of $\alpha_{n7},$ 
	\begin{align*}
		|\alpha_{n7}| &\leq \frac{\Big[ \sum_{l=1}^{S} \big[|z_{n}-h(x_{nl},y_{nl})|\big]^{2}\Big]^{1/2} \Big[ \sum_{l=1}^{S} |u_{nl}|^2\Big]^{1/2}} {\sum_{l=1}^{S} |u_{nl}|^2}
	\end{align*}
	Put $k_{h}=\Big[ \sum_{l=1}^{S} |u_{nl}|^2\Big]^{1/2}.$ Then, 
	\begin{align}
		|\alpha_{n7}| \leq \frac{\Big[ \sum_{l=1}^{S} |z_{nl}-h(x_{nl},y_{nl})|^2\Big]^{1/2}}{k_{h}}
	\end{align}
	
	\noindent	However, the formula given below has been used for the computation purpose. 
	\begin{align}
		\alpha_{n7}=\frac{Z_{G}-\frac{1}{3}(z_{n1}+z_{n2}+z_{n3})}{Z_{H}-\frac{1}{3}(z_{1}+z_{2}+z_{3})},
	\end{align}
	\noindent  where $Z_{G}=$ value of $f$ at the centroid of the subtriangle $D_{n},$ $Z_{H}=$ value of $f$ at the centroid of $D.$

	\section{Error analysis}
	Let $f$ be a continuous function on $D.$ Then, 
	\begin{align*}
		||f||_{\infty}=\text{max}\{|f(x,y)|: (x,y)\in D\}
	\end{align*}
	
	Modulus of continuity of $f$ is defined as 
	\begin{align*}
		w_{f}(\delta)=\text{sup}\big\{|f(x,y)-f(x^{'},y^{'})|:(x,y), (x^{'},y^{'})\in D, d\big((x,y), (x^{'},y^{'})\big)\leq \delta\big\}
	\end{align*}
	
	\begin{lemma}
		If $f$ is a continuous function providing the data $R=\{(x_{nj},y_{nj},z_{nj}): n=1,2,...,N, j=1,2,3\}$ and $g$ be the corresponding fractal interpolation function with scale vector $\overline{\alpha_{n7}}.$ Then, 
		\begin{align*}
			||f-g||_{\infty} \leq w_{f}(\delta)k{'}+\frac{||h-b||_{\infty}|\alpha_{n7}|_{\infty}}{1-|\alpha_{n7}|_{\infty}}.
		\end{align*}
	\end{lemma}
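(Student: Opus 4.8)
The plan is to build everything on the two fixed-point identities already available. The basic recursion \eqref{rec} reads $g(x,y)=\alpha_{n7}\,g\circ L_n^{-1}(x,y)+Q_n\circ L_n^{-1}(x,y)$ on $D_n$, and the preceding theorem reformulates it, applied to the interpolant $g$, as
\[
g(x,y)=h(x,y)+\alpha_{n7}(g-b)\circ L_n^{-1}(x,y),\qquad (x,y)\in D_n,
\]
where $h=h_n$ is the plane through the vertices of $D_n$ and $b$ the plane through the vertices of $D$. The structural fact I would exploit repeatedly is that $L_n^{-1}$ maps $D_n$ bijectively onto the whole of $D$, so that $\sup_{(x,y)\in D_n}\lvert \phi\circ L_n^{-1}(x,y)\rvert=\lVert\phi\rVert_{\infty,D}$ for any continuous $\phi$ on $D$. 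This is precisely what lets the vertical factor $\alpha_{n7}$ act as a genuine geometric-series gain, and it is the mechanism behind the denominator $1-\lvert\alpha_{n7}\rvert_\infty$ in the claim. Throughout I write $\lVert h-b\rVert_\infty:=\max_n\lVert h_n-b\rVert_\infty$ and $\lvert\alpha_{n7}\rvert_\infty:=\max_n\lvert\alpha_{n7}\rvert$, making explicit the maxima over subtriangles that the statement keeps implicit.

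The first and central step is a self-referential estimate for $\lVert g-b\rVert_\infty$. Subtracting $b$ from the reformulated relation gives, on $D_n$,
\[
g-b=(h-b)+\alpha_{n7}(g-b)\circ L_n^{-1}.
\]
Taking the supremum over $D_n$, replacing $\sup_{D_n}\lvert(g-b)\circ L_n^{-1}\rvert$ by $\lVert g-b\rVert_\infty$ via the bijectivity remark, and then maximising over $n$ (using $D=\cup_n D_n$) yields
\[
\lVert g-b\rVert_\infty\le\lVert h-b\rVert_\infty+\lvert\alpha_{n7}\rvert_\infty\,\lVert g-b\rVert_\infty .
\]
Since $\lvert\alpha_{n7}\rvert_\infty<1$ by hyperbolicity, rearranging gives $\lVert g-b\rVert_\infty\le\lVert h-b\rVert_\infty/\bigl(1-\lvert\alpha_{n7}\rvert_\infty\bigr)$. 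I regard this bound as the technical heart of the proof.

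Finally I would estimate $\lVert f-g\rVert_\infty$ directly. Subtracting the reformulated relation from $f$ gives, on $D_n$,
\[
f-g=(f-h)-\alpha_{n7}(g-b)\circ L_n^{-1}.
\]
For the first term, since $h=h_n$ is the affine interpolant of the vertex data $z_{nj}=f(x_{nj},y_{nj})$, expressing any point of $D_n$ in barycentric coordinates shows $h$ is the corresponding convex combination of vertex values, whence $\lvert f-h\rvert\le w_f(\delta)$ on $D_n$ with $\delta$ bounding the diameters of the subtriangles; this is the source of the term $w_f(\delta)\,k'$. For the second term I apply the bijectivity remark once more and insert the estimate for $\lVert g-b\rVert_\infty$ from the previous step. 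Maximising over $n$ then produces
\[
\lVert f-g\rVert_\infty\le w_f(\delta)\,k'+\frac{\lvert\alpha_{n7}\rvert_\infty\,\lVert h-b\rVert_\infty}{1-\lvert\alpha_{n7}\rvert_\infty},
\]
which is the claim. The main obstacle is not any single calculation but the correct orchestration of the two suprema: getting the self-referential inequality for $\lVert g-b\rVert_\infty$ to close, and handling the hidden $\max_n$ over subtriangles consistently so that the composition with $L_n^{-1}$ legitimately replaces a local supremum on $D_n$ by the global supremum on $D$.
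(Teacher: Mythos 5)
Your proof is correct, and its skeleton coincides with the paper's: both start from the relation $g=h+\alpha_{n7}(g-b)\circ L_{n}^{-1}$ established in the preceding theorem, bound the discrepancy $f-h$ by the modulus of continuity, and close a self-referential inequality using $|\alpha_{n7}|_{\infty}<1$. There are two local differences worth recording. First, the paper splits $\|f-g\|_{\infty}\le\|f-h\|_{\infty}+\|h-g\|_{\infty}$ and closes the fixed-point inequality on $\|g-h\|_{\infty}$ (via $\|g-h\|_{\infty}\le|\alpha_{n7}|_{\infty}\bigl(\|g-h\|_{\infty}+\|h-b\|_{\infty}\bigr)$), whereas you close it on $\|g-b\|_{\infty}$ and then substitute back; the algebra is equivalent and the final bound identical, so this is a matter of organization only. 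Second, and more substantively, for $\|f-h\|_{\infty}$ the paper expands the plane $h$ explicitly as a combination of the $z_{nj}$ and estimates six terms one by one, producing the constant $k'=12\max_{n,j}\{|x_{nj}|\}\max_{n,j}\{|y_{nj}|\}$; you instead observe that $h$, being the affine interpolant of the vertex data, takes at each point of $D_{n}$ a convex (barycentric) combination of the vertex values, whence $|f-h|\le w_{f}(\delta)$ outright once $\delta$ dominates the diameters of the $D_{n}$. Your argument is shorter, avoids the paper's rather coarse bookkeeping, and yields the sharper constant $1$ in place of $k'$, so it proves the stated inequality with a better (smaller) constant whenever $k'\ge 1$, and in any case proves the lemma with $k'$ read as a generic constant fixed by the proof. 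You also make explicit two points the paper leaves implicit: that $\|h-b\|_{\infty}$ and $|\alpha_{n7}|_{\infty}$ carry hidden maxima over $n$, and that $\sup_{D_{n}}|\phi\circ L_{n}^{-1}|=\|\phi\|_{\infty,D}$ because $L_{n}^{-1}$ maps $D_{n}$ onto all of $D$ --- the mechanism that legitimately turns a local supremum into the global one and lets the geometric-series rearrangement close.
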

	\begin{proof}
		Let $h$ be the equation of a plane passing through $(x_{n1},y_{n1}, z_{n1}), (x_{n2},y_{n2}, z_{n2}), (x_{n3},y_{n3}, z_{n3}).$ Then, \begin{align*}
			||f-g||_{\infty} \leq ||f-h||_{\infty} + ||h-g||_{\infty}
		\end{align*} Consider the first part. \vspace{0.1cm} \\		
		Let $w_{f}(\delta)$ be the modulus of continuity of $f,$ $i.e,$ 
		\begin{align*}
			w_{f}(\delta)=\text{sup}\big\{|f(x,y)-f(x^{'},y^{'})|:(x,y), (x^{'},y^{'})\in D, d\big((x,y), (x^{'},y^{'})\big)\leq \delta\big\}
		\end{align*}
		Now, rearranging  $h(x,y),$ 
		\begin{align*}
			h(x,y)&=\frac{z_{n1}\big(x_{n3}y-xy_{n3}+x_{n2}y_{n3}-x_{n3}y_{n2}+xy_{n2}-x_{n2}y\big)}{\big(x_{n2}y_{n3}-x_{n3}y_{n2}+x_{n3}y_{n1}-x_{n1}y_{n3}+x_{n1}y_{n2}-x_{n2}y_{n1}\big)} \\ & + \frac{z_{n2}\big(xy_{n3}-x_{n3}y+x_{n3}y_{n1}-x_{n1}y_{n3}+x_{n1}y-xy_{n1}\big)}{\big(x_{n2}y_{n3}-x_{n3}y_{n2}+x_{n3}y_{n1}-x_{n1}y_{n3}+x_{n1}y_{n2}-x_{n2}y_{n1}\big)}  \\ &+ \frac{z_{n3}\big(x_{n2}y-xy_{n2}+x_{n1}y_{n2}-x_{n2}y_{n1}+xy_{n1}-x_{n1}y\big) }{\big(x_{n2}y_{n3}-x_{n3}y_{n2}+x_{n3}y_{n1}-x_{n1}y_{n3}+x_{n1}y_{n2}-x_{n2}y_{n1}\big)} 
		\end{align*}
		Now, $|f(x,y)-h(x,y)|$
		\begin{align} 
			\nonumber
			&\leq 2\text{sup}\big\{ |f(x,y)-f(x_{n1},y_{n1})|\big\}  \text{max}_{n,j}\{|x_{nj}|\}max_{n,j}\{|y_{nj}|\} \\ \nonumber &+2\text{sup}\big\{|f(x,y)-f(x_{n2},y_{n2})|\big\}  \text{max}_{n,j}\{|x_{nj}|\}\text{max}_{n,j}\{|y_{nj}|\} \\  \nonumber &+ 2\text{sup}\big\{ |f(x,y)-f(x_{n3},y_{n3})|\big\}  \text{max}_{n,j}\{|x_{nj}|\}\text{max}_{n,j}\{|y_{nj}|\} \\ &+2 \text{sup}\big\{ |f(x_{n1},y_{n1})-f(x_{n2},y_{n2})|\big\}  \text{max}_{n,j}\{|x_{nj}|\}\text{max}_{n,j}\{|y_{nj}|\} \label{eqn:qwe}\\  \nonumber &+ 2\text{sup}\big\{ |f(x_{n1},y_{n1})-f(x_{n3},y_{n3})|\big\}  \text{max}_{n,j}\{|x_{nj}|\}\text{max}_{n,j}\{|y_{nj}|\} \\  \nonumber &+2 \text{sup}\big\{ |f(x_{n2},y_{n2})-f(x_{n3},y_{n3})|\big\} \text{max}_{n,j}\{|x_{nj}|\}\text{max}_{n,j}\{|y_{nj}|\} 
		\end{align}
		Here, $x,y,x_{nj},y_{nj} $ for $n=1,2,...,N, j=1,2,3 $ lie in $D_{n}.$ \vspace{0.1cm} \\ Take $\delta$ to be the maximum of the distance between any two points in $D_{n}.$ Then, by definition, 	\begin{align*}
			w_{f}(\delta)=\text{sup}\big\{|f(x,y)-f(x_{nj},y_{nj})|:(x,y), (x_{nj},y_{nj})\in D_{n}, d\big((x,y), (x_{nj},y_{nj})\big)\leq \delta\big\}
		\end{align*} 
		Therefore, \eqref{eqn:qwe} becomes 
		\begin{align*}
			|f(x,y)-h(x,y)|\leq 12 w_{f}(\delta) \text{max}_{n,j}\{|x_{nj}|\}\text{max}_{n,j}\{|y_{nj}|\}
		\end{align*} Put $k^{'}=12 \text{max}_{n,j}\{|x_{nj}|\}\text{max}_{n,j}\{|y_{nj}|\} .$ Then,  $|f(x,y)-h(x,y)|\leq w_{f}(\delta) k^{'}.$
		Hence, 
		\begin{align*}
			||f-h||_{\infty} \leq  w_{f}(\delta) k^{'}.
		\end{align*}
		Now, considering the second part, $||h-g||_{\infty},$ 
		Using the equation, 
		\begin{align*}
			g(x,y)=h(x,y)+\alpha_{n7}(g-b)oL_{n}^{-1}(x,y)
		\end{align*}
		
		Now,
		\begin{align*}
			||g-h||_{\infty}  &\leq |\alpha_{n7}|_{\infty} ||g-b||_{\infty} \\ & \leq \Big[ ||g-h||_{\infty} + ||h-b||_{\infty} \Big] |\alpha_{n7}|_{\infty} \\ &\leq \frac{||h-b||_{\infty}|\alpha_{n7}|_{\infty}}{1-|\alpha_{n7}|_{\infty}}
		\end{align*}   
		Therefore, \begin{align*}
			||f-g||_{\infty} \leq w_{f}(\delta)k{'}+\frac{||h-b||_{\infty}|\alpha_{n7}|_{\infty}}{1-|\alpha_{n7}|_{\infty}}.
		\end{align*} Hence the proof. 
	\end{proof}  
	
	\begin{theorem}
		If $f$ is a continuous function providing the data $R=\{(x_{nj},y_{nj},z_{nj}): n=1,2,...,N, j=1,2,3\}$ and $g$ be the corresponding fractal interpolation function with scale vector $\overline{\alpha_{n7}}.$ Then, the double integral calculated by the proposed method converges to the actual integral value as $\delta \rightarrow 0.$
	\end{theorem}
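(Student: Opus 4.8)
The plan is to reduce the convergence of the proposed quadrature to the uniform convergence $g \to f$ already packaged in the preceding Lemma, and then to integrate that pointwise estimate over $D$. The first observation is that the value produced by the method is exactly $\int_{D} g\,dx\,dy = B/(1-A)$, the genuine double integral of the fractal interpolation function $g$, whereas the ``actual'' value is $\int_{D} f\,dx\,dy$. Hence the error satisfies
\begin{align*}
\left| \int_{D} f\,dx\,dy - \int_{D} g\,dx\,dy \right| \le \int_{D} |f(x,y)-g(x,y)|\,dx\,dy \le \mathrm{Area}(D)\,\|f-g\|_{\infty},
\end{align*}
so it suffices to prove $\|f-g\|_{\infty}\to 0$ as $\delta\to 0$. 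For this I would invoke the Lemma's bound $\|f-g\|_{\infty} \le w_{f}(\delta)k' + \frac{\|h-b\|_{\infty}|\alpha_{n7}|_{\infty}}{1-|\alpha_{n7}|_{\infty}}$ and show that both summands vanish in the limit.

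The first summand is the easy one. Since $D$ is compact, $f$ is uniformly continuous on $D$, whence $w_{f}(\delta)\to 0$ as $\delta\to 0$. The constant $k'=12\,\max_{n,j}|x_{nj}|\,\max_{n,j}|y_{nj}|$ is bounded independently of the refinement, because every subtriangle vertex lies inside the fixed triangle $D$; therefore $w_{f}(\delta)k'\to 0$.

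The second summand is where the real work lies, and it reduces to showing $|\alpha_{n7}|_{\infty}\to 0$. Here I would use the computational formula $\alpha_{n7}=\big(Z_{G}-\tfrac13(z_{n1}+z_{n2}+z_{n3})\big)/\big(Z_{H}-\tfrac13(z_{1}+z_{2}+z_{3})\big)$. Its numerator equals $f(c_{n})-h(c_{n})$, where $c_{n}$ is the centroid of $D_{n}$ and, by the barycentric identity for affine functions, $h(c_{n})=\tfrac13(z_{n1}+z_{n2}+z_{n3})$ is the value at $c_{n}$ of the plane through the vertices of $D_{n}$. Since $c_{n}$ lies within $\delta$ of each vertex of $D_{n}$, this numerator is at most $w_{f}(\delta)$ in modulus, while the denominator is a fixed nonzero quantity $f(\text{centroid of }D)-b(\text{centroid of }D)$ determined by $D$ alone. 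Consequently $|\alpha_{n7}|_{\infty}\le w_{f}(\delta)/|Z_{H}-\tfrac13(z_{1}+z_{2}+z_{3})|\to 0$, and, combined with boundedness of $\|h-b\|_{\infty}$ (each interpolating plane $h$ agrees with $f$ at the vertices of its subtriangle and so stays within a fixed distance of $f$, hence of $b$, on $D_{n}$), the quotient $\frac{\|h-b\|_{\infty}|\alpha_{n7}|_{\infty}}{1-|\alpha_{n7}|_{\infty}}\to 0$.

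Putting the two estimates together yields $\|f-g\|_{\infty}\to 0$, and therefore $\mathrm{Area}(D)\|f-g\|_{\infty}\to 0$, which is the assertion. The delicate point, and the step I would scrutinise most carefully, is the uniform control of the second term: one must ensure that the \emph{product} $\|h-b\|_{\infty}|\alpha_{n7}|_{\infty}$ genuinely tends to zero, since for a merely continuous $f$ the interpolating planes $h$ over the shrinking subtriangles can develop steep slopes when measured globally on $D$. The argument above keeps this in check by reading $\|h-b\|_{\infty}$ locally over $D_{n}$ and by coupling it to the deviation $f(c_{n})-h(c_{n})$ that is already hidden inside $\alpha_{n7}$; making that coupling fully rigorous, rather than bounding the two factors independently, is the crux of the proof.
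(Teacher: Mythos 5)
Your proposal is correct and follows the paper's skeleton exactly as far as the reduction goes: error $\le \Delta\|f-g\|_{\infty}$, then the Lemma's bound, then everything hinges on showing $|\alpha_{n7}|_{\infty}\to 0$. At that crucial step, however, you take a genuinely different route. The paper never touches the centroid formula in its proof: it bounds the least-squares scaling factor through the Cauchy--Schwarz estimate of Section 5, $|\alpha_{n7}|\le\bigl[\sum_{l=1}^{S}|z_{nl}-h(x_{nl},y_{nl})|^{2}\bigr]^{1/2}/k_{h}$, combines it with $|z_{nl}-h(x_{nl},y_{nl})|\le\|f-h\|_{\infty}\le w_{f}(\delta)k'$ (valid because $f$ interpolates the data), and obtains $|\alpha_{n7}|_{\infty}\le w_{f}(\delta)k_{h}'$ with $k_{h}'=Sk'/k_{h}$. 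You instead bound the centroid-ratio formula directly: its numerator is $f(c_{n})-h(c_{n})$ by affinity of $h$ (where $c_{n}$ is the centroid of $D_{n}$), hence at most $w_{f}(\delta)$, while the denominator $f(c_{D})-b(c_{D})$ (with $c_{D}$ the centroid of $D$) is a single number fixed by $f$ and $D$. Your route buys two things: it analyzes the formula the paper actually computes with (the paper proves convergence for the least-squares $\alpha_{n7}$ yet tabulates results using the centroid formula, a mismatch your argument removes), and your constant is manifestly independent of the refinement, whereas the paper's $k_{h}$ and $S$ silently vary with $n$ and with the mesh; the paper also slips in writing $\bigl[\sum_{l=1}^{S}(w_{f}(\delta)k')^{2}\bigr]^{1/2}=w_{f}(\delta)k'S$ rather than $w_{f}(\delta)k'\sqrt{S}$, which is harmless but symptomatic. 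The price of your route is the nondegeneracy you assert but do not prove: $f(c_{D})\neq b(c_{D})$, i.e.\ $Z_{H}\neq\frac{1}{3}(z_{1}+z_{2}+z_{3})$; if $f$ at the centroid of $D$ equals the mean of the three vertex values, the centroid formula is undefined and your bound degenerates, so this should be flagged as an explicit hypothesis (the paper's implicit assumption $k_{h}>0$ plays the same role on its side). Finally, your insistence on reading $\|h-b\|_{\infty}$ locally, triangle by triangle, so that it remains bounded under refinement, is the correct interpretation of the Lemma and resolves the very point the paper leaves unexamined.
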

	
	\begin{proof}
		\begin{align*}
			|E| = \Big|\int \int _{D} f - \int \int_{D}g\Big| \leq \Delta ||f-g||_{\infty}
		\end{align*}
		\noindent where $\Delta$ is the area of the triangle $D$ and 
		\begin{align*}
			||f-g||_{\infty} \leq w_{f}(\delta)k^{'}+\frac{||h-b||_{\infty}|\alpha_{n7}|_{\infty}}{1-|\alpha_{n7}|_{\infty}}.
		\end{align*}
		\noindent Since $f$ is an interpolation function to the data set, 
		\begin{align*}
			|z_{nl}-h(x_{nl},y_{nl})| &= |f(x_{nl},y_{nl})-h(x_{nl},y_{nl})| \\ &\leq \text{sup}_{l} \big\{|f(x_{nl},y_{nl})-h(x_{nl},y_{nl})|\big\} \\ &=||f-h||_{\infty} \\ &\leq w_{f}(\delta) k^{'}
		\end{align*}
		Now, 
		\begin{align*}
			|\alpha_{n7}|_{\infty} &\leq \frac{\Big[{\sum_{l=1}^{S} |z_{nl}-h(x_{nl},y_{nl})|^2 \Big]^{1/2}}}{k_{h} } \\ &\leq \frac{\Big[ \sum_{l=1}^{S} (w_{f}(\delta)k^{'})^2 \Big]^{1/2}}{k_{h}} \\  &=\frac{w_{f}(\delta)k^{'}S}{k_{h}} 
		\end{align*} Writing $k_{h}^{'}=\frac{Sk^{'}}{k_{h}},$ implies $|\alpha_{n7}|_{\infty} \leq w_{f}(\delta)k_{h}^{'}.$

		Since $\alpha_{n7}$ is bounded by $w_{f}(\delta)k_{h}^{'},$

		\begin{align*}
			||f-g||_{\infty} \leq  w_{f}(\delta)k^{'}+\frac{||h-b||_{\infty}w_{f}(\delta)k_{h}^{'}}{1-w_{f}(\delta)k_{h}^{'}}.
		\end{align*} For a continuous function $f$ on $D,$ as $\delta \rightarrow 0, w_{f}(\delta)\rightarrow 0.$  \\
		$i.e, ||f-h|| \leq w_{f}(\delta) k^{'} \rightarrow 0, $ implies $ h$ uniformly converges to $f$ and 
		\begin{align*}
			|E| &\leq \Delta ||f-g||_{\infty} \\ & = \Delta w_{f}(\delta) \Big[ k^{'} + \frac{k_{h}^{'}||h-b||_{\infty}}{1-w_{f}(\delta)k_{h}^{'}}\Big] \rightarrow 0
		\end{align*} as $w_{f}(\delta) \rightarrow 0.$ Hence the proof.
	\end{proof}
	\section{Examples}
	
	\noindent	The double integral values and the attractors of the IFS are provided for two functions. The computation of the attractor and the integral value is done using amrita-hpc matlab 2019. \vspace{0.3cm}\\
	\noindent \textbf{Example 1 : Matyas Function} \vspace{0.1cm}\\	
	\noindent Consider Matyas function  \begin{align*}
		f(x,y)=0.26(x^{2}+y^{2})-0.48xy \,\,\, \text{where} -10 \leq x \leq 10, -10 \leq y \leq 10.
	\end{align*} The actual integral value of the double integral ($I$) is compared with the numerical integration method proposed. The comparison is given in Table \ref{Matyas function}. 
	The attractor of the IFS is shown in Figure \ref{Figure 2}.
	\begin{center}
		\begin{figure}[h]
			\begin{center}
				\includegraphics[width=8cm]{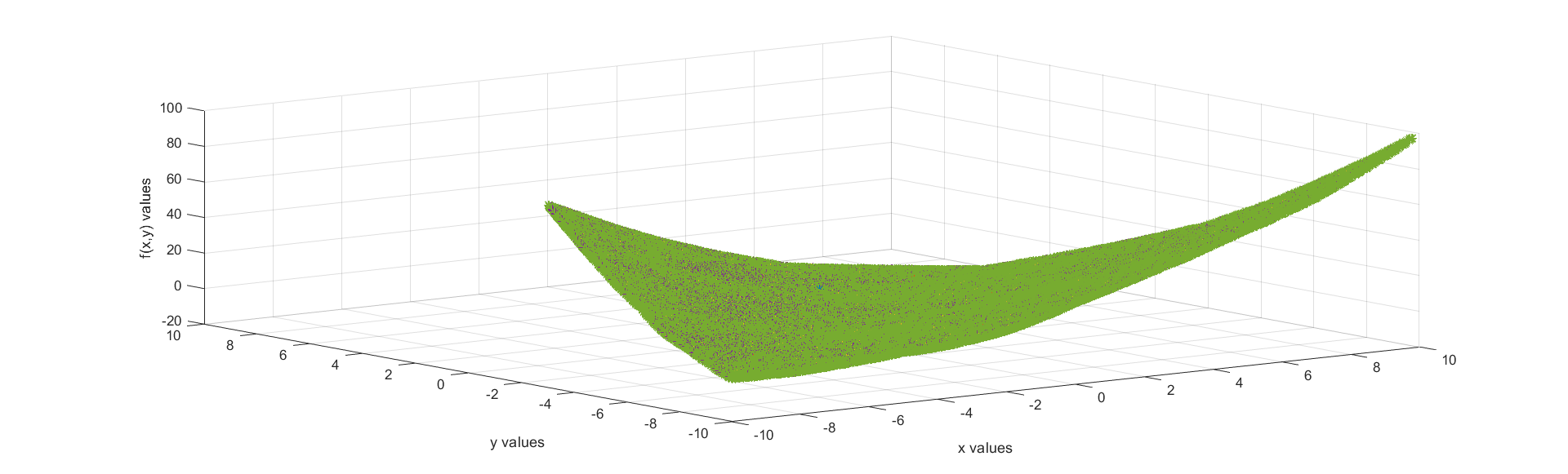}
				\caption{Attractor of the IFS for Matyas function}
				\label{Figure 2}
			\end{center}
		\end{figure}
	\end{center}
	\FloatBarrier
	\begin{table}[htb]
		\begin{tabular}{|c|c|c|c|c|}
			\hline
			d(No of subdivisions) & N (No of subtriangles) & M & I & Error (M-I) \\
			\hline
			4 & 27 & 2.4299e+03 & 2.6000e+03 & -170.0594 \\
			\hline
			7 &  87  &    2.5401e+03       & 2.6000e+03 &    -59.8738       \\
			\hline
			10 & 183 & 2.5696e+03 & 2.6000e+03 & -30.3787 \\
			\hline
			13 & 315     &    2.5818e+03        & 2.6000e+03 &   -18.2392      \\
			\hline
			\vdots & \vdots & \vdots & \vdots &\vdots \\
			
			\hline
			73 &    10515     &  2.5993e+03     & 2.6000e+03 &  -0.6064    \\
			\hline
			76 & 11403 & 2.5994e+03 & 2.6000e+03 & -0.5598\\		
			\hline
			79 &   12327     &  2.5994e+03          & 2.6000e+03 &    -0.5182     \\
			\hline
			\vdots & \vdots & \vdots & \vdots &\vdots \\
			\hline
			148 &   43515   &    2.5999e+03       &  2.6000e+03 &    -0.1292        \\
			\hline
			151 &    45303    &   2.5999e+03      & 2.6000e+03 &    -0.0786        \\
			\hline
			154 &   47127     &  2.5999e+03        & 2.6000e+03 &    -0.0523        \\
			\hline
			
		\end{tabular}
		\captionsetup{justification=centering}
		\caption{Matyas function\label{Matyas function}}
	\end{table}
	\FloatBarrier

	\noindent \textbf{Example 2 : Three-hump Camel Function} \vspace{0.1cm}\\
	\noindent Consider Three-hump Camel function $$f(x,y)=2x^{2}-1.05x^{4} + \frac{x^{6}}{6} +xy+ y^{2}\,\,\, \text{where} -5 \leq x \leq 5, -5 \leq y \leq 5.$$ A similar comparison of the integral value as in example one is given in Table \ref{Threehump camel function}.
	The attractor of the IFS is given in Figure \ref{Figure 3}.
	
	\begin{center}
		\begin{figure}[h]
			\begin{center}
				\includegraphics[width=10cm]{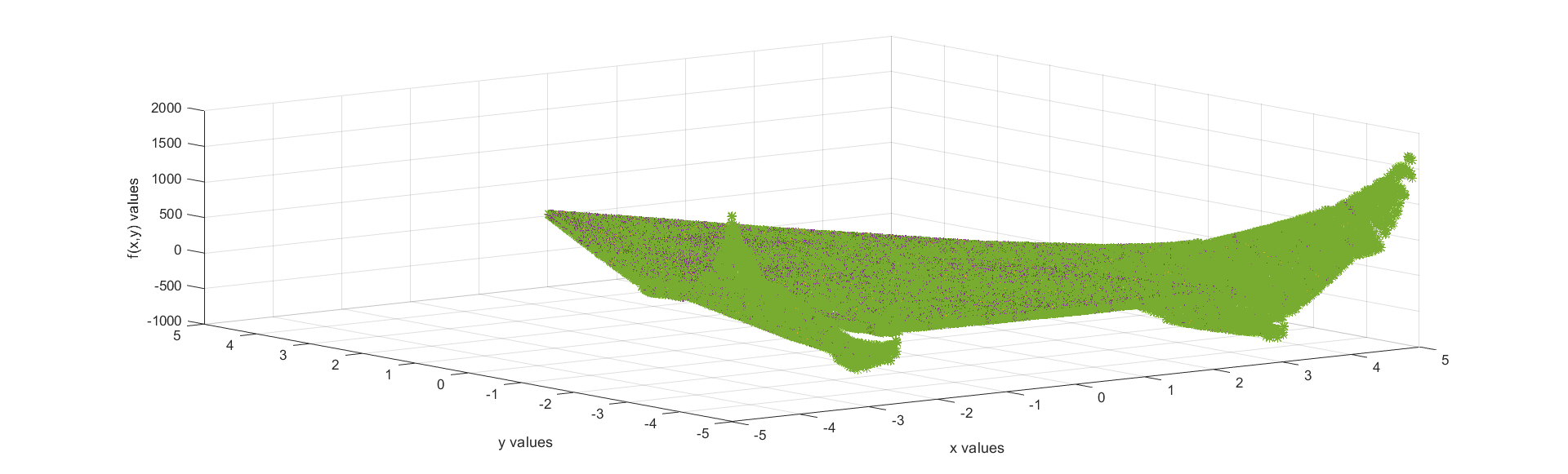}
				\caption{Attractor of the IFS for Three-hump camel function}
				\label{Figure 3}
			\end{center}
		\end{figure}
		
	\end{center}

	\begin{table}[h]
		\begin{center}
			\begin{tabular}{|c|c|c|c|c|}
				\hline 		
				d(No of subdivisions) & N (No of subtriangles) & M & I & Error (M-I) \\
				\hline
				4 & 27 & 2.1979e+03 & 3.2961e+03 &  -1.0982e+03\\
				\hline
				7 & 87 &   2.8932e+03          & 3.2961e+03 &     -402.9155         \\
				\hline
				10 & 183 & 3.0917e+03& 3.2961e+03 &-204.4563 \\
				\hline
				13 & 315 &    3.1732e+03       & 3.2961e+03 &  -122.9447          \\
				\hline
				\vdots & \vdots & \vdots & \vdots & \vdots \\
				\hline
				73 &   10515 &  3.2920e+03     & 3.2961e+03 &    -4.0326       \\
				\hline
				
				76 & 11403   & 3.2924e+03  & 3.2961e+03 & -3.7214  \\
				\hline
				79 & 12327 &  3.2926e+03          &  3.2961e+03 &  -3.4448    \\
				\hline
				\vdots & \vdots & \vdots & \vdots & \vdots \\
				\hline
				148 & 43515 &      3.2952e+03       & 3.2961e+03 &    -0.8926         \\
				\hline
				151 &   45303     &    3.2960e+03        &   3.2961e+03           &   -0.0624         \\
				\hline
				154 & 47127 &    3.2961e+03        & 3.2961e+03 &   -0.0241        \\
				\hline
			\end{tabular}
			\caption{Three-hump camel function \label{Threehump camel function}}
		\end{center}
	\end{table}

	\FloatBarrier
	
	\section{Conclusion}
	This paper describes the construction of bivariate fractal interpolation functions using the coloring technique and derives the formula for double integration. In between, a novel method is also proposed for the partition of the triangle.
	Since the IFS itself induces a well defined fractal operator, the paper establishes that the graph of the function coincides with the attractor of the IFS. Instead of choosing the vertical scaling factor randomly, this paper provides a useful formula for it, based on the shape of the interpolating domain. Further, this paper shows that the approximating function for the bivariate fractal interpolation functions over triangular regions is nothing but the equation of the plane passing through the vertices of a triangle. This function is then used to prove the theorems in error analysis. Finally, the results of the double integration are tabulated and the method of construction is explained with appropriate graphs.

\end{document}